\documentclass{amsart}

\usepackage{amssymb}
\usepackage{amsmath}




\title{Existence of $q$-Analogs of Steiner Systems}

\author[M.\,Braun]{Michael Braun}
\address{
Darmstadt University of Applied Sciences\\
Darmstadt, Germany}
\email{michael.braun@h-da.de}

\author[T.\,Etzion]{Tuvi Etzion}
\address{
Technion, Haifa, 
Israel}
\thanks{The research of T.\,E. was supported in part by the Israeli
Science Foundation (ISF),~Jerusa\-lem, Israel, under Grant 10/12.}
\email{etzion@cs.technion.ac.il}

\author[P.R.J.\,\"Osterg{\aa}rd]{Patric R.\,J.\ \"Osterg{\aa}rd}
\address{
Aalto University\\ 
Aalto, Finland}
\thanks{The research of P.\,R.\,J.\,\"O. was
supported in part by the Academy of Finland
under Grant No.\ 132122.}
\email{patric.ostergard@aalto.fi}

\author[A.Vardy]{Alexander Vardy}
\address{
University of California San Diego\\
La Jolla, CA} 
\email{avardy@ucsd.edu}

\author[A.\,Wassermann]{Alfred Wassermann\vspace*{7ex}}
\address{
University of Bayreuth\\
Bayreuth, Germany}
\email{Alfred.Wassermann@uni-bayreuth.de}

\pretolerance = 50
\tolerance = 100
\hyphenpenalty = 1
\interdisplaylinepenalty = 2500
\setlength{\emergencystretch}{3pt}
\clubpenalty = 300
\widowpenalty = 300
\displaywidowpenalty = 100
\hbadness = 10000
\vbadness = 6000
\hfuzz = 9.5pt


\newtheorem{theorem}{Theorem}
\newtheorem{lemma}[theorem]{Lemma}
\newtheorem{conjecture}[theorem]{Conjecture}



\DeclareMathOperator{\GL}{GL}
\DeclareMathOperator{\LL}{L}
\DeclareMathOperator{\Gal}{Gal}

\DeclareMathOperator{\GF}{GF}
\DeclareMathOperator{\Aut}{Aut}
\DeclareMathOperator{\inv}{inv}


\renewcommand{\leq}{\leqslant}
\renewcommand{\geq}{\geqslant}


\newcommand{\F}{\mathbb{F}}
\newcommand{\Z}{\mathbb{Z}}
\newcommand{\GammaL}{\Gamma\!\LL}

\makeatletter
\DeclareRobustCommand{\sbinom}{\genfrac[]\z@{}}
\makeatother
\newcommand{\G}[2]{\sbinom{{#1}}{{#2}}}


\newcommand{\deff}{\mbox{$\stackrel{\rm def}{=}$}}


\newcommand{\dS}{\mathcal S} 
\newcommand{\zero}{{\mathbf 0}}
\newcommand{\al}{\alpha}
\newcommand{\bM}{{\mathbf M}}

\begin{document}

\begin{abstract}
Let $\F_q^n$ be a vector space of dimension $n$ over the finite field $\F_q$.
A~\mbox{$q$-analog} of a Steiner system (briefly, a $q$-Steiner system), denoted
$\dS_q[t,k,n]$, is a~set $\dS$ of $k$-dimensional subspaces of $\F_q^n$ such
that each $t$-dimensional subspace of $\F_q^n$ is contained in exactly one
element of $\dS$. Presently, $q$-Steiner systems are known only for $t=1$, and
in the trivial cases $t = k$ and $k = n$. In 
this paper, the first nontrivial
$q$-Steiner systems with $t\;{\geq}\;2$ are constructed.
Specifically, several nonisomorphic
$q$-Steiner systems $\dS_2[2,3,13]$ are found by requiring that
their automorphism groups contain the normalizer of a~Singer subgroup of
$\GL(13,2)$. This approach leads to an instance of the exact cover
problem, which turns out to have many solutions.
\end{abstract}

\maketitle

\thispagestyle{empty}


\section{Introduction} \label{sec:introduction}

Let $V$ be a set with $n$ elements. A \emph{$t$-$(n,k,\lambda)$ combinatorial
design} (or {\emph{$t$-design}}, in brief)
is a collection of $k$-subsets of $V$,
called blocks, such that each $t$-subset of $V$ is contained in exactly
$\lambda$ blocks. A $t$-$(n,k,1)$ design with $t \geq 2$ is called a
\emph{Steiner system}, and usually denoted $S(t,k,n)$.
Nontrivial $t$-designs exist for
all $t$, while nontrivial (meaning $t < k < n$) Steiner systems are known to
exist for $2 \leq t \leq 5$; see~\cite[Part II]{CD07}, for example.

\looseness=-1
It was suggested by Tits~\cite{Tits57} in 1957 that combinatorics of sets could
be regarded as the limiting case $q \to 1$ of combinatorics of vector spaces
over the finite field $\F_q$. Indeed, there is a strong analogy between subsets
of a set and subspaces of a vector space, expounded by numerous
authors---see~\cite{Cohn,GR,Wang} and references therein. In particular, the
notions of $t$-designs and Steiner systems have been extended to vector spaces
by Cameron~\cite{Cameron1,Cameron2} and Delsarte~\cite{Delsarte} in the early
1970s. Specifically, let $\F_q^n$ be a vector space of dimension $n$ over the
finite field $\F_q$.
Then a \emph{$t$-$(n,k,\lambda)$ design over $\F_q$}
is a collection of $k$-dimensional~subspaces of $\F_q^n$
($k$-subspaces, for short), called blocks, such that each
$t$-subspace of $\F_q^n$ is contained in exactly $\lambda$ blocks.
Such $t$-designs over $\F_q$ are the $q$-analogs of conventional
combinatorial
designs.
A~$t$-$(n,k,1)$ design over $\F_q$ is said to
be a \emph{$q$-Steiner system}, and denoted $\dS_q[t,k,n]$.

The first examples of nontrivial $t$-designs over $\F_q$ with $t \geq 2$ were
found~by~Tho\-mas~\cite{T87} in 1987. Today, following the work of many
authors~\cite{BKL,MMY,Chaudhuri,S90,S92,T96}, numerous such examples are known.

The situation is very different for $q$-Steiner systems. They are known to exist
in the trivial cases $t = k$ or $k=n$, and in the case where $t = 1$ and $k$
divides $n$. In the latter case, $q$-Steiner systems coincide with the classical
notion of \emph{spreads} in projective geometry~\cite[Chap.\ 24]{LW}.
Beutelspacher~\cite{Beutelspacher} asked in~1978 whether nontrivial $q$-Steiner
systems with $t \geq 2$ exist, and this question has tantalized mathematicians
ever since. The problem has been studied by numerous
authors~\cite{AAK,EV11b,M,SE,T87,T96}, without much
progress toward constructing
such $q$-Steiner systems. In particular, Thomas~\cite{T96} showed in 1996 that
certain kinds of $\dS_2[2,3,7]$ $q$-Steiner systems (the smallest possible
example) cannot exist. In 1999, Metsch~\cite{M} conjectured that~nontrivial
$q$-Steiner systems with $t \geq 2$ do not exist in general.

Our main result is the following theorem.
\begin{theorem}
\label{main}
There exist nontrivial $q$-Steiner systems with $t \geq 2$.
\end{theorem}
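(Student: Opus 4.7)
The plan is to prove Theorem~\ref{main} by constructing an explicit $q$-Steiner system $\dS_2[2,3,13]$; since $2<3<13$ and $t=2$, this is nontrivial as required. The first step is numerical: verify the standard divisibility conditions at parameters $(q,t,k,n)=(2,2,3,13)$, namely that each derived replication number $\G{n-i}{t-i}_q/\G{k-i}{t-i}_q$ is an integer for $0\le i\le t$. These necessary conditions are comfortably satisfied here, and they motivate the focus on this particular instance as the smallest admissible case still open after Thomas's nonexistence result for certain $\dS_2[2,3,7]$.

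A direct enumeration is hopeless because the number of candidate blocks $\G{13}{3}_2$ is far too large. The idea is therefore to prescribe a sizeable automorphism group and search only for invariant systems. Following the abstract, I take the group $G$ to be the normalizer in $\GL(13,2)$ of a Singer subgroup; identifying $\F_2^{13}\setminus\{\zero\}$ with $\F_{2^{13}}^{*}$, this group acts by multiplication together with the Frobenius automorphisms from $\Gal(\F_{2^{13}}/\F_2)$, and hence permutes both the $2$-subspaces and the $3$-subspaces of $\F_2^{13}$. Any $G$-invariant $\dS_2[2,3,13]$ is a union of $G$-orbits of $3$-subspaces whose members, collectively, contain each $2$-subspace exactly once.

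By the $q$-analog of the Kramer--Mesner method, such a $G$-invariant system corresponds to a $0$/$1$ vector $x$ indexed by the $G$-orbits on $3$-subspaces satisfying $A\,x=\mathbf{1}$, where $A$ is the orbit-incidence matrix whose entry at the row of a $2$-orbit $\mathcal{O}$ and the column of a $3$-orbit $\mathcal{O}'$ records the number of blocks in $\mathcal{O}'$ containing a fixed representative of $\mathcal{O}$. Passing to the normalizer of the Singer subgroup is essential: it shrinks the matrix $A$ to a manageable size while leaving (as turns out empirically) enough flexibility for the equation $Ax=\mathbf{1}$ to admit $0$/$1$ solutions. Thus the task reduces to an \emph{exact cover} instance whose columns are the selected $3$-orbits.

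The main obstacle is this exact cover problem itself, and here all the work is computational. The plan is to attack it with a dedicated solver---Knuth's dancing-links implementation of Algorithm~X, or one of the solvers developed specifically for $q$-design searches---exploiting the residual symmetries of $A$ for pruning. Producing even one feasible $x$ yields an explicit $\dS_2[2,3,13]$, whose defining property can then be verified independently by direct incidence counts over the $k$-subspaces listed; this alone establishes Theorem~\ref{main}. The abstract indicates that in fact many nonisomorphic solutions emerge, which can subsequently be distinguished by standard invariants or canonical-form computations, but for the theorem itself a single verified solution suffices.
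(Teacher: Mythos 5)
Your proposal follows essentially the same route as the paper: prescribe the normalizer $A_\alpha$ of a Singer subgroup of $\GL(13,2)$ as a group of automorphisms, reduce to the Kramer--Mesner orbit-incidence system (here $105$ rows and $30\,705$ columns), solve the resulting exact cover instance with Knuth's dancing links algorithm, and exhibit one solution --- the paper completes this by listing $15$ explicit orbit representatives whose $A_\alpha$-orbits form an $\dS_2[2,3,13]$. The only substantive difference is cosmetic: you add a preliminary check of the divisibility conditions, which the paper takes for granted.
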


\looseness=-1
In fact, we have discovered over 400 nonisomorphic $\dS_2[2,3,13]$ $q$-Steiner
systems. For more on this, see Section~\ref{sec:sol}; however, let us briefly
outline~our~general approach here. We begin by imposing a carefully chosen
\emph{additional structure} on a putative $\dS_2[2,3,13]$ $q$-Steiner system
$\dS$. Specifically, we construct~a group $A \le \GL(13,2)$ as the semidirect
product of the Galois group $\Gal(\F_{2^{13}}/\F_2)$ and a Singer subgroup
$C_{\alpha}$ of $\GL(13,2)$, and then insist that the automorphism group
$\Aut(\dS)$ contain $A$. Next, we make use of the well-known Kramer--Mes\-ner
method, and consider the Kramer--Mesner incidence structure between
the orbits of $2$-subspaces of $\F_2^{13}$ and the orbits of $3$-subspaces of
$\F_2^{13}$ under the action of $A$. Given the corresponding Kramer--Mesner
matrix ${\mathbf M}^A$ with 105 rows and 30\,705 columns, we reformulate the
search for $\dS$ as an instance of the exact cover problem, which we solve using
the dancing links algorithm of Knuth. 

\looseness=-1
As corollaries to the existence of $\dS_2[2,3,13]$,
we obtain a number of related results.
Starting with $\dS_2[2,3,13]$, we use
\cite[Theorem\,3.2]{EV11b} to construct
a~Steiner system $S(3,8,8192)$.
Steiner systems with these parameters were not known previously.
$\dS_2[2,3,13]$ also leads to
new diameter-perfect codes in the Grassmann graph~\cite{AAK,SE}.
Finally, we note that $q$-Steiner systems have applications for
error-correction in networks, under randomized network coding,
as shown in~\cite{EV11a,KK08a}. Thus we find that the maximum
number of codewords in a subspace code over $\F_2^{13}$
of constant dimension $k = 3$ and minimum subspace distance $d=4$
is $1\,597\,245$.

\looseness=-1
The rest of this paper is organized as follows. In Section~\ref{sec:map}, we
consider automorphisms of $q$-Steiner systems,\kern-0.5pt\ and introduce the
normalizer of a Singer~sub\-group, which is the group of automorphisms we
choose to impose on $\dS_2[2,3,13]$. In Section~\ref{sec:sol},
we briefly outline
the Kramer--Mesner method, and describe the computer search we have carried out
based upon the results of Section~\ref{sec:map}. We give an explicit set
of 15 orbit
representatives for the 1\,597\,245 subspaces of one $\dS_2[2,3,13]$
$q$-Steiner system, thereby proving Theorem~\ref{main}. We also present
several negative results that establish nonexistence of $q$-Steiner systems of
certain kinds, extending the work of \cite{EV11b,KK08b,T96}.
We discuss the connection
to difference sets in Section~\ref{sec:related},
and compile a number of related results.
In Section~\ref{sec:conc}, we conclude with a brief discussion of open
problems, and formulate a specific conjecture regarding the existence
of an infinite family of $q$-Steiner systems.

\newpage
\vspace{1.00ex}
\section{Automorphisms of $q$-Steiner systems}
\label{sec:map}

\looseness=-1
Let $G$ be the group of bijective incidence-preserving
mappings from~the~set of subspaces of $\F_q^n$ onto itself.
We know from the
fundamental theorem of projective geometry~\cite[Chap.\,3]{Bae52}
that $G$ is the general semilinear group $\GammaL(n,q)$. This
group is isomorphic to the semidirect product of the general
linear group $\GL(n,q)$ and the Galois group $\Gal(\F_q/\F_p)$,
where $p$ is the characteristic of $\F_q$. Unless stated
otherwise, we will henceforth assume that $q$ is prime, in
which case $G$ reduces 
to the general linear group $\GL(n,q)$.
Basic familiarity with the main properties of
$\GL(n,q)$ is assumed; an in-depth treatment of this group can
be found, for example, in \cite{H67}.

\looseness=-1
The action of $\GL(n,q)$ on subspaces of $\F_q^n$ extends in
the obvious way to sets of subspaces, and thereby to $q$-Steiner systems.
Given a set $\dS$ of subspaces of $\F_q^n$ and a group element
$g\in \GL(n,q)$, we denote the image of $\dS$~under the
action of $g$ by $\dS^g$. We say that two
sets of subspaces $\dS_1$ and $\dS_2$ are \emph{isomorphic} if there exists an
element $g \in \GL(n,q)$ such that $\dS_2 = \dS_1^g$.
An element $g \in \GL(n,q)$ for~which
$\dS^g = \dS$ is called an \emph{automorphism} of $\dS$. The automorphisms
of a set $\dS$ of subspaces form a group under composition, called the
\emph{automorphism group} and denoted $\Aut(\dS)$. A subgroup of $\Aut(\dS)$
is called a~\emph{group of automorphisms}. We note that,
since $\GL(n,q)$ acts transitively
on~the
set of $k$-subspaces of $\F_q^n$ for any fixed $k$, the automorphism group
of a nontrivial $q$-Steiner system is necessarily a~proper subgroup
of $\GL(n,q)$.

A well-known approach to constructing combinatorial objects is to prescribe
a~certain group
of automorphisms $A$ and then search only for those objects whose
automorphism group contains $A$. For an overview of the theory and applications
of this method to combinatorial designs, the reader is referred to
\cite[Sect.~9.2]{KO}. Prescribing a group of
automorphisms simplifies the construction problem, sometimes rendering
intractable problems tractable, but choosing the right groups can be
a~challenge. We shall now discuss certain apposite subgroups of $\GL(n,q)$.

A \emph{Singer cycle} of $\GL(n,q)$ is an element of order $q^n-1$.
Singer cycles~can be constructed, for example, by identifying
vectors in $\F_q^n$ with elements of the finite field $\F_{q^n}$.
Since multiplication by a primitive element $\alpha \in \F_{q^n}$ is a~linear
operation, it corresponds to a Singer cycle in $\GL(n,q)$.
In fact, there is a~one-to-one correspondence between Singer cycles in
$\GL(n,q)$ and primitive elements in $\F_{q^n}$.
Given a primitive element $\al \in \F_{q^n}$, the
subgroup of $\GL(n,q)$ generated by the corresponding
Singer cycle is cyclic of order $q^n-1$, and its elements
correspond to multiplication by $\al^i$ for $i = 0,1,\ldots,q^n\!-2$.
We denote such groups by $C_\al$ and call them the
\emph{Singer subgroups} of $\GL(n,q)$.

\looseness=-1
Another group of importance to us is generated by the \emph{Frobenius
automorphism} $\phi: \F_{q^n} \to \F_{q^n}$, defined by $\phi(\beta) = \beta^q$
for all $\beta \in \F_{q^n}$. The Frobenius automorphism $\phi$
is the canonical
generator of the \emph{Galois group} $\Gal(\F_{q^n}/\F_q)$,
which is cyclic of order $n$.

The \emph{normalizer of a subgroup $H \le G$} is the set of elements of $G$
that commute with $H$ as a whole. That is, $N_G(H)=\{ g \in G : gH = Hg\}$.
The~following well-known result can be found, for example, in
\cite[pp.~187--188]{H67}. 

\begin{theorem}
\label{order}
\hspace*{-3pt}Let
$A_\alpha$ be the normalizer of a Singer subgroup $C_\alpha$ in\/ $\GL(n,q)$.
Then $A_\alpha$ has order $n(q^n-1)$ and is isomorphic to the semidirect product
of the Galois group\/ $\Gal(\F_{q^n}/\F_q)$ and\/ $C_\alpha$.
\end{theorem}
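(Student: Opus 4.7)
The plan is to exploit the identification $\F_q^n \cong \F_{q^n}$ suggested by the text, under which $C_\alpha$ is realized as the group of $\F_q$-linear maps $m_\beta : x \mapsto \beta x$ with $\beta \in \F_{q^n}^*$. I would first exhibit a concrete subgroup of $N_{\GL(n,q)}(C_\alpha)$ of the required order, and then prove that nothing else lies in the normalizer.

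For the lower bound, the Frobenius map $\phi : x \mapsto x^q$ is $\F_q$-linear and therefore an element of $\GL(n,q)$, of order $n$. A direct computation gives $\phi \circ m_\alpha \circ \phi^{-1} = m_{\alpha^q}$, which lies in $C_\alpha$; hence $\phi$ normalizes $C_\alpha$. Moreover $\langle \phi \rangle \cap C_\alpha = \{1\}$, because an equation $\phi = m_\beta$ would force $\beta x = x^q$ for every $x \in \F_{q^n}$, which already fails at $x = 1$ unless $q = 1$. Consequently $\langle C_\alpha, \phi\rangle$ is a semidirect product $C_\alpha \rtimes \Gal(\F_{q^n}/\F_q)$ of order $n(q^n-1)$, contained in $A_\alpha$.

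For the matching upper bound I would argue in two steps. First, I would show that the centralizer of $C_\alpha$ in $\GL(n,q)$ is $C_\alpha$ itself: the commutant of $m_\alpha$ in the full matrix algebra $\mathrm{End}_{\F_q}(\F_{q^n})$ coincides with $\F_{q^n}$ (since $\alpha$ is a cyclic generator whose minimal polynomial has degree $n$), and intersecting with the unit group yields $C_\alpha$. Second, the natural map $A_\alpha/C_\alpha \hookrightarrow \Aut(C_\alpha) \cong (\Z/(q^n-1)\Z)^*$ sends a normalizing element $g$ to the exponent $j$ for which $g m_\alpha g^{-1} = m_{\alpha^j}$. Since conjugation in $\GL(n,q)$ preserves the $\F_q$-minimal polynomial, $\alpha^j$ must be a Galois conjugate of $\alpha$, that is, $j \equiv q^i \pmod{q^n-1}$ for some $0 \le i < n$. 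Thus the image has order at most $n$, giving $|A_\alpha| \le n(q^n-1)$ and forcing equality together with the semidirect product structure already constructed.

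The main obstacle, and the one non-routine step, is the identification $C_{\GL(n,q)}(C_\alpha) = C_\alpha$; everything else reduces to elementary manipulations with the Frobenius and with conjugacy in $\GL(n,q)$. Once that centralizer computation is in hand, the theorem follows by the order count above, and the semidirect decomposition is witnessed explicitly by the complement $\langle \phi\rangle \cong \Gal(\F_{q^n}/\F_q)$.
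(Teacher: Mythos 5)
Your argument is essentially correct, but note that the paper does not prove Theorem~\ref{order} at all: it is quoted as a well-known fact with a pointer to Huppert \cite{H67}, so there is no in-paper proof to compare against. What you have supplied is a self-contained and standard proof, and its two pillars are sound: the centralizer computation $C_{\GL(n,q)}(C_\alpha)=C_\alpha$ (via the commutant of a cyclic/nonderogatory operator being the polynomial algebra $\F_q[m_\alpha]=\F_{q^n}$), and the embedding $A_\alpha/C_\alpha\hookrightarrow\Aut(C_\alpha)$ whose image is pinned down to $\langle q\rangle\le(\Z/(q^n-1)\Z)^{*}$ by invariance of the minimal polynomial under conjugation. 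Together with the explicit complement $\langle\phi\rangle$ this does give $|A_\alpha|=n(q^n-1)$ and the semidirect decomposition.

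One step is misstated, though it is easily repaired. To get $\langle\phi\rangle\cap C_\alpha=\{1\}$ you must rule out $\phi^i=m_\beta$ for \emph{every} $0<i<n$, not only $i=1$ (the intersection is a subgroup of the cyclic group $\langle\phi\rangle$, so a priori it could be generated by a proper power of $\phi$ when $n$ is composite). Moreover your stated justification is wrong as written: the equation $\beta x=x^{q}$ does \emph{not} fail at $x=1$ --- it merely forces $\beta=1$. The correct argument is: $\phi^i=m_\beta$ evaluated at $x=1$ gives $\beta=1$, whence $x^{q^i}=x$ for all $x\in\F_{q^n}$; but the polynomial $x^{q^i}-x$ has at most $q^i<q^n$ roots for $0<i<n$, a contradiction. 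With that patch the proof is complete.
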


The following theorem follows from a more general result by Kantor~\cite{K80}.
It is stated explicitly in~\cite{D}. 

\begin{theorem} \label{thm:max}
Let $n$ be an odd prime. Then the normalizer of a Singer~subgroup is a maximal
subgroup of\/ $\GL(n,q)$.
\end{theorem}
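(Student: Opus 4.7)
The plan is to deduce the maximality of $A_\alpha$ in $\GL(n,q)$ from Kantor's classification~\cite{K80} of the subgroups of $\GL(n,q)$ that contain a Singer cycle. Let $H$ be any subgroup with $A_\alpha \leq H \leq \GL(n,q)$; it suffices to show that $H = A_\alpha$ or $H = \GL(n,q)$. Since $A_\alpha$ contains a generator of $C_\alpha$, so does $H$, and Kantor's theorem then yields a short list of possibilities for $H$.

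Roughly, Kantor's classification says that any such $H$ either (i) lies in $\GammaL(n/s,q^s)$ for some divisor $s > 1$ of $n$, under the natural identification of $\F_q^n$ with $\F_{q^n}$ viewed as a vector space over $\F_{q^s}$; (ii) contains $\SL(n,q)$; or (iii) belongs to a short explicit list of exceptional subgroups. Under the hypothesis that $n$ is an odd prime, the only admissible divisor $s > 1$ in case~(i) is $s = n$, and one checks directly that $\GammaL(1,q^n) = A_\alpha$ (both have order $n(q^n-1)$ by Theorem~\ref{order}, and both contain $C_\alpha$ together with the Frobenius automorphism $\phi$). So case~(i) forces $H = A_\alpha$.

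For case (ii), I would argue that the determinant of multiplication by $\alpha$ on $\F_{q^n}$, viewed as an $\F_q$-linear map, equals the norm $N_{\F_{q^n}/\F_q}(\alpha) = \alpha^{(q^n-1)/(q-1)}$. Since $\alpha$ is primitive in $\F_{q^n}^*$, this norm is primitive in $\F_q^*$. Hence the determinant map $\det : C_\alpha \to \F_q^*$ is surjective, which forces $C_\alpha \cdot \SL(n,q) = \GL(n,q)$, and therefore $H = \GL(n,q)$.

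The main obstacle is case (iii): one must verify that none of Kantor's exceptional subgroups can interpose when $n$ is an odd prime. These exceptional examples are tied to a small number of specific parameter triples (associated, for instance, with Mathieu-type or $\mathrm{PSL}_2$-type actions), and the odd-prime restriction is precisely what is needed to exclude them. Following the explicit formulation given in~\cite{D}, one inspects this short list case by case and checks that no entry corresponds to $n$ an odd prime, which finishes the proof.
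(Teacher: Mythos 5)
The paper itself gives no proof of this statement: it simply records that the result ``follows from a more general result by Kantor~\cite{K80}'' and ``is stated explicitly in~\cite{D}.'' Your proposal takes exactly that route but actually attempts to carry out the deduction, so the comparison is really between your sketch and the cited literature. Your cases~(i) and~(ii) are sound: for $n$ an odd prime the only proper divisor tower is $s=n$, the identification $\GammaL(1,q^n)=A_\alpha$ follows from the order count $n(q^n-1)$ together with the fact that both groups normalize $C_\alpha$, and the norm computation $\det(\alpha\cdot)=\alpha^{(q^n-1)/(q-1)}$ correctly shows that $\det$ is surjective on $C_\alpha$, whence $C_\alpha\cdot\SL(n,q)=\GL(n,q)$.

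The soft spot is your case~(iii). As written, the exclusion of ``exceptional subgroups'' is an unverified assertion --- you say one ``inspects this short list case by case'' without producing the list or the inspection, and this is precisely the step on which the whole argument hinges. In fact the worry is largely moot: Kantor's theorem in~\cite{K80} concludes, with no exceptional cases, that any subgroup of $\GL(n,q)$ containing a Singer cycle has a normal subgroup $\GL(n/s,q^s)$ for some divisor $s$ of $n$ (naturally embedded), so for $n$ an odd prime any $H$ with $A_\alpha\leq H$ satisfies either $H\supseteq\GL(n,q)$ or $\GL(1,q^n)\trianglelefteq H\leq\GammaL(1,q^n)=A_\alpha$, and your cases~(i) and~(ii) already finish the proof. (The exceptional examples you allude to, such as the $A_7<\GL(4,2)$ type phenomena, arise in the classification of two-transitive collineation groups, not in the statement of Kantor's theorem, and in any event they do not contain a Singer cycle.) So your proof is correct in outline and matches the paper's intended source, but to be complete you would need to quote the precise form of Kantor's classification rather than a remembered approximation of it; with the correct statement in hand, case~(iii) disappears entirely.
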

%
%

\looseness=-1
In Section \ref{sec:sol}, we will search for $q$-Steiner
systems $\dS$ whose automorphism group $\Aut(\dS)$ 
contains the normalizer of a Singer subgroup.
We observe that Singer~subgroups and normalizers of Singer
subgroups have been used when prescribing automorphisms for
various types of $q$-analog structures~in~\cite{BKL,EV11a}.

We already noted that $\Aut(\dS) < \GL(n,q)$ for
nontrivial designs over $\F_q$.
Thus for odd primes $n$, it follows from Theorem~\ref{thm:max}
that if $\Aut(\dS)$ contains~$A_\alpha$, then $A_\al$ is the
full automorphism group of $\dS$, namely $\Aut(\dS) = A_\alpha$.
In turn, the fact that
$\Aut(\dS) = A_\alpha$ makes it possible to say
much more. In particular, we will show that distinct nontrivial
designs whose automorphism group contains $A_\alpha$ are
necessarily nonisomorphic. First, we need the following
lemma.\vspace{-0.50ex}

\begin{lemma}
\label{lem:self}
The normalizer $A_\alpha$
of a Singer subgroup 
is self-normalizing in\/ $\GL(n,q)$. That
is, $A_\alpha = N_{\GL(n,q)}(A_\alpha)$.\vspace{-0.50ex}
\end{lemma}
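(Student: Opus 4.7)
The plan is to reduce the claim to the statement that $C_\alpha$ is a \emph{characteristic} subgroup of $A_\alpha$, i.e.\ stable under every abstract-group automorphism of $A_\alpha$. The inclusion $A_\alpha\le N_{\GL(n,q)}(A_\alpha)$ is immediate from the definition of the normalizer. For the reverse, any $g\in N_{\GL(n,q)}(A_\alpha)$ induces an automorphism of $A_\alpha$ by conjugation; if $C_\alpha$ is characteristic, then $g C_\alpha g^{-1}=C_\alpha$, and hence $g\in N_{\GL(n,q)}(C_\alpha)=A_\alpha$ by the very definition of $A_\alpha$.

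To show that $C_\alpha$ is characteristic, I would give an intrinsic group-theoretic description of it inside $A_\alpha$. A clean choice is
\[
C_\alpha \;=\; C_{A_\alpha}\!\bigl([A_\alpha,A_\alpha]\bigr),
\]
the centralizer of the derived subgroup, since both operations are automatically preserved by every automorphism of $A_\alpha$. To verify this identity I would invoke Theorem~\ref{order}, writing each element of $A_\alpha$ uniquely as $\phi^j\sigma$ with $\phi$ the Frobenius generator and $\sigma\in C_\alpha$, and identifying $C_\alpha$ with $\F_{q^n}^{*}$. A brief commutator calculation gives $[\phi,\sigma]\leftrightarrow\beta^{q-1}$ whenever $\sigma\leftrightarrow\beta$, so $[A_\alpha,A_\alpha]$ contains the subgroup $H$ of $(q-1)$-th powers in $\F_{q^n}^{*}$, which is cyclic of order $(q^n-1)/(q-1)$. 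An element $\phi^j\sigma$ with $0<j<n$ commutes with $h\in C_\alpha$ precisely when $h$ lies in the fixed field $\F_{q^{\gcd(j,n)}}$ of $\phi^j$, so such an element centralizes $H$ only if $H\subseteq \F_{q^{\gcd(j,n)}}^{*}$; the elementary inequality $(q^n-1)/(q-1)>q^d-1$ for every proper divisor $d$ of $n$ rules this out. Combined with the obvious inclusion $C_\alpha\subseteq C_{A_\alpha}([A_\alpha,A_\alpha])$ (since $[A_\alpha,A_\alpha]\le C_\alpha$ and $C_\alpha$ is abelian), this yields the displayed identity.

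The main obstacle is the bookkeeping in the commutator and centralizer computation, together with the harmless corner case $n=1$ in which $A_\alpha=C_\alpha$ is abelian and the lemma is trivial. Once the description $C_\alpha = C_{A_\alpha}([A_\alpha,A_\alpha])$ is established, the lemma follows in one line from the initial reduction: conjugation by any $g\in N_{\GL(n,q)}(A_\alpha)$ preserves both $[A_\alpha,A_\alpha]$ and its centralizer in $A_\alpha$, hence preserves $C_\alpha$, and therefore $g\in N_{\GL(n,q)}(C_\alpha)=A_\alpha$.
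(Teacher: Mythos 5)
Your proof is correct, and its skeleton matches the paper's: both reduce the claim to showing that conjugation by any $g\in N_{\GL(n,q)}(A_\alpha)$ fixes $C_\alpha$ setwise, whence $g\in N_{\GL(n,q)}(C_\alpha)=A_\alpha$ by definition. Where you diverge is in how that key step is justified. The paper argues that $g^{-1}C_\alpha g$ is again a Singer subgroup contained in $A_\alpha$, and then cites Cossidente--de Resmini \cite[Proposition~2.5]{CR04} for the fact that $A_\alpha$ contains a \emph{unique} Singer subgroup; the step is one line but rests on an external reference. You instead prove the stronger statement that $C_\alpha$ is characteristic in $A_\alpha$, by exhibiting it intrinsically as $C_{A_\alpha}([A_\alpha,A_\alpha])$. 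Your supporting computations check out: with the convention $[\phi,\sigma]=\phi\sigma\phi^{-1}\sigma^{-1}$ one indeed gets $\beta^{q-1}$, so $[A_\alpha,A_\alpha]$ is sandwiched between the subgroup $H$ of $(q-1)$-th powers and the abelian group $C_\alpha$; an element $\phi^j\sigma$ with $0<j<n$ centralizes $h\leftrightarrow\gamma$ iff $\gamma^{q^j}=\gamma$, i.e.\ $\gamma\in\F_{q^{\gcd(j,n)}}$; and $|H|=(q^n-1)/(q-1)>q^d-1$ for every proper divisor $d$ of $n$ when $n\geq 2$. What your route buys is self-containedness and a slightly stronger conclusion (characteristic, not merely normalized by $N_{\GL(n,q)}(A_\alpha)$), at the cost of a page of bookkeeping; what the paper's route buys is brevity, at the cost of an external citation. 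Both are valid.
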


\begin{proof}
Let $g \in N_{\GL(n,q)}(A_\alpha)$. Then
$g^{-1}C_\alpha g \le g^{-1}A_\alpha g = A_\alpha$.
The conjugate of a Singer subgroup is also a Singer subgroup.
On the other hand,~it~is known \cite[Proposition\,2.5]{CR04}
that $A_\alpha$ contains a \emph{unique} Singer subgroup.
In conjunction with $g^{-1}C_\alpha g \le A_\alpha$, this implies
that $g^{-1}C_\alpha g = C_\alpha$.
This, in turn, implies that $g \in A_\alpha$,
and therefore $N_{\GL(n,q)}(A_\alpha) = A_\alpha$.
\end{proof}

\begin{theorem}
\label{thm:iso2}
Suppose that $n \geq 3$ and $q$ are primes,
and let $A_\alpha$ be the~norm\-alizer
of a Singer subgroup in $\GL(n,q)$.
Then distinct nontrivial $q$-Steiner~systems
$\dS_q[t,k,n]$ admitting $A_\alpha$ as a group of
automorphisms are nonisomorphic.
\end{theorem}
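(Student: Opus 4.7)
The plan is to show that if two nontrivial $q$-Steiner systems $\dS_1$ and $\dS_2$ both admit $A_\alpha$ as a group of automorphisms and happen to be isomorphic via some $g \in \GL(n,q)$ (so $\dS_2 = \dS_1^g$), then $g$ is forced to lie in $A_\alpha$; since $A_\alpha \le \Aut(\dS_1)$, this collapses the isomorphism to an equality $\dS_2 = \dS_1$, proving the contrapositive of the statement.

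First I would pin down the full automorphism group of each $\dS_i$. Because $n \geq 3$ is prime, it is odd, so Theorem \ref{thm:max} applies and $A_\alpha$ is a maximal subgroup of $\GL(n,q)$. Coupled with the remark made just after Theorem~\ref{thm:max} that $\Aut(\dS_i)$ is a \emph{proper} subgroup of $\GL(n,q)$ whenever $\dS_i$ is a nontrivial $q$-Steiner system, the hypothesis $A_\alpha \le \Aut(\dS_i)$ leaves no room strictly between $A_\alpha$ and $\GL(n,q)$. Thus $\Aut(\dS_1) = \Aut(\dS_2) = A_\alpha$.

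Second, I would translate the isomorphism $\dS_2 = \dS_1^g$ into a conjugation identity on automorphism groups. Using the action convention $\dS^{ab} = (\dS^a)^b$ fixed in Section~\ref{sec:map}, a routine check gives $\Aut(\dS_1^g) = g^{-1} \Aut(\dS_1)\, g$. Substituting the conclusion of the previous step yields
\[
A_\alpha \;=\; \Aut(\dS_2) \;=\; \Aut(\dS_1^g) \;=\; g^{-1} A_\alpha\, g,
\]
so $g \in N_{\GL(n,q)}(A_\alpha)$. Now Lemma \ref{lem:self} identifies this normalizer with $A_\alpha$ itself, so $g \in A_\alpha \le \Aut(\dS_1)$ and hence $\dS_2 = \dS_1^g = \dS_1$.

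There is no genuinely hard step here: the two preceding results, namely the maximality of $A_\alpha$ in $\GL(n,q)$ and its self-normalizing property, combine almost mechanically to yield the theorem. The only places where I would slow down are checking the action-convention identity $\Aut(\dS_1^g) = g^{-1}\Aut(\dS_1)g$, and noting explicitly that the hypothesis "$n \geq 3$ and $n$ prime" is used precisely to force $n$ to be an odd prime so that Theorem \ref{thm:max} is applicable.
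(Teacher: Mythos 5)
Your proposal is correct and follows essentially the same route as the paper: both establish $\Aut(\dS_1)=\Aut(\dS_2)=A_\alpha$ from the maximality of $A_\alpha$ (Theorem~\ref{thm:max}) together with the properness of the automorphism group of a nontrivial system, then show that an isomorphism $g$ must normalize $A_\alpha$ and invoke Lemma~\ref{lem:self} to force $g\in A_\alpha$, whence $\dS_2=\dS_1$. The only cosmetic difference is that you package the conjugation step as the identity $\Aut(\dS_1^g)=g^{-1}\Aut(\dS_1)g$, while the paper verifies $g^{-1}ag\in\Aut(\dS_2)$ element by element.
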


\begin{proof}
Let $\dS_1$ and $\dS_2$ be two distinct $\dS_q[t,k,n]$
$q$-Steiner systems, both~admitting $A_\alpha$ as a group of
automorphisms. Then
\begin{equation}
\label{aux1}
\Aut(\dS_1) \,=\, \Aut(\dS_2) \,=\, A_\alpha
\end{equation}
by Theorem~\ref{thm:max}. 
Assume to the contrary that $\dS_1$ and $\dS_2$
are isomorphic, that is $\dS_1^g = \dS_2$ for some $g \in \GL(n,q)$.
Let $a \in A_{\alpha}$. Then it follows from (\ref{aux1}) that
\begin{equation}
\label{aux2}
\dS_2^{g^{-1}ag} =\, \dS_1^{ag} =\, \dS_1^g \,=\, \dS_2
\end{equation}
and therefore $g^{-1}ag \in \Aut(\dS_2) = A_{\alpha}$.
Since (\ref{aux2}) holds for all $a \in  A_{\alpha}$,
we~conclude that $g$ must belong to the normalizer of $A_{\alpha}$,
which is $A_\alpha$ itself by Lemma~\ref{lem:self}.
But for $g \in A_\alpha$, we have $\dS_1^g = \dS_1$ by (\ref{aux1}).
Hence $\dS_2 = \dS_1$, a contradiction.
\end{proof}

We next show how to classify the subspaces of $\F_q^n$ into orbits under
the~action of various groups of interest. Fix a primitive element $\alpha$ of
$\F_{q^n}$, and write a $k$-subspace $X$ of $\F_q^n$ as
$X = \{\zero,\alpha^{x_1},\alpha^{x_2},\ldots,\alpha^{x_m}\}$, where $m = q^k-1$
and $x_1,x_2,\ldots,x_m \in \Z_{q^n-1}$. For $x \in \Z_{q^n-1}$, let $\rho(x)$
be the minimal cyclotomic~representative for $x$,~that is
$\rho(x) = \min\{ xq^i \mod (q^n-1) ~:~ 0\leq i \leq n-1\}$.
We define
\begin{align*}
\inv_F(X)~\,\deff~&
~\{ \rho(x_i) ~:~ 1\leq i \leq m\},
\\[-.30ex]
\inv_S(X)~\,\deff\,~&
~\{ x_i - x_j ~:~ 1\leq i,j \leq m \text{ with $i \ne j$}\},
\\[-0.30ex]
\inv_N(X)~\,\deff\,~&
~\{ \rho(x_i - x_j) ~:~ 1\leq i,j\leq m \text{ with $i \ne j$}\}.
\end{align*}

\begin{lemma}
\label{lem:orbit} $\,$\vspace{.50ex}
\begin{enumerate}
\item
If two $k$-subspaces $X$, $Y$ of\/ $\F_q^n$ are in the same orbit
under the action of the Galois group\/ $\Gal(\F_{q^n}/\F_q)$ then\/
$\inv_F(X) = \inv_F(Y)$.

\item
If two $k$-subspaces $X$, $Y$ of\/ $\F_q^n$ are in the same orbit
under the action of the Singer subgroup $C_\alpha$ then\/
$\inv_S(X) = \inv_S(Y)$.

\item
\label{lem:orbit3}
If two $k$-subspaces $X$, $Y$ of\/ $\F_q^n$ are in the same orbit
under the action of the normalizer $A_\alpha$ of the Singer subgroup
$C_\alpha$ then\/ $\inv_N(X) = \inv_N(Y)$.
\end{enumerate}
\end{lemma}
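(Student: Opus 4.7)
The plan is to handle each of the three parts by tracking how a typical element of the relevant group acts on the exponent set $\{x_1,\ldots,x_m\}$, and then checking that each displayed invariant depends only on features of that set which survive the action.

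For part (1), a generator $\phi$ of $\Gal(\F_{q^n}/\F_q)$ sends $\alpha^{x_i}$ to $\alpha^{x_i q}$, so $\phi^j$ sends the exponent $x_i$ to $x_i q^j \pmod{q^n-1}$. Since $\rho$ is by definition constant on each cyclotomic coset $\{x q^i \bmod (q^n-1) : 0 \le i \le n-1\}$, we have $\rho(x_i q^j) = \rho(x_i)$, and thus $\inv_F(X)$ is unchanged. For part (2), multiplication by $\alpha^c \in C_\alpha$ sends $\alpha^{x_i}$ to $\alpha^{x_i + c}$, i.e.\ translates every exponent by $c$; each difference $x_i - x_j$ is then preserved verbatim, and hence so is $\inv_S(X)$.

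For part (3), the plan is to appeal to Theorem~\ref{order}, which tells us that $A_\alpha$ is generated by $C_\alpha$ together with the Galois group $\Gal(\F_{q^n}/\F_q)$, so it suffices to verify invariance of $\inv_N$ on each of these two generating pieces. On a Singer element the exponents are shifted by a common constant, so every difference $x_i - x_j$, and a fortiori its $\rho$-value, is preserved; on $\phi^j$ each difference is multiplied by $q^j \pmod{q^n-1}$, but $\rho$ absorbs this multiplication exactly as in part (1). Thus $\inv_N(X)$ is invariant under every generator, and so under all of $A_\alpha$.

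I do not anticipate any genuine obstacle: parts (1) and (2) are one-line computations, and the only conceptual point is that for part (3) one must first reduce to a generating set of $A_\alpha$, which is exactly what Theorem~\ref{order} furnishes. The sole bookkeeping subtlety is to be careful that everything is computed modulo $q^n-1$ in the exponent ring $\Z_{q^n-1}$, so that the cyclotomic minimum $\rho$ and the translations by $c$ are well defined; once this is in place the three claims follow with essentially no further work.
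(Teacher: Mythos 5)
Your proposal is correct and follows essentially the same route as the paper: the paper's proof likewise tracks the Singer generator (which shifts all exponents by a constant modulo $q^n-1$, preserving differences) and the Frobenius automorphism (which multiplies each exponent, hence each difference, by $q$, preserving cyclotomic cosets), with part (3) following because $A_\alpha$ is generated by these two pieces. The only difference is that the paper leaves the reduction to generators implicit, whereas you spell it out via Theorem~\ref{order}.
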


\begin{proof}
Let $X = \{\zero,\alpha^{x_1},\alpha^{x_2},\ldots,\alpha^{x_m}\}$
be a $k$-subspace of $\F_q^n$, 
with
$x_1,x_2,\ldots,x_m$ in $\Z_{q^n-1}$.
The action of the generator of $C_\alpha$ on $X$
increases $x_1,x_2,\ldots,x_m$ by one
modulo $q^n-1$, thereby preserving the differences
between them.
The action of the Frobenius automorphism $\phi$ on $X$ multiplies
each $x_i$ by $q$ modulo $q^n-1$, thereby leaving it in the
same cyclotomic coset.
\end{proof}

\vspace{1ex}
\section{Kramer--Mesner computer search}
\label{sec:sol}

Constructing  $t$-designs over $\F_q$ is equivalent to
solving
certain systems of linear Diophantine equations.
Let $\bM$ be a $\{0,1\}$ matrix with rows and columns
indexed by the $t$-subspaces and the $k$-subspaces of $\F_q^n$,
respectively; there is a~$1$ in row $X$ and column $Y$ of $\bM$
iff $t$-subspace $X$ is contained in $k$-subspace~$Y$.
With this definition, 
a $t$-$(n,k,\lambda)$ design
over $\F_q$ is precisely a $\{0,1\}$ solution to
\begin{equation}
\label{Mx}
\bM \,x \ = \ (\lambda,\lambda,\ldots,\lambda)^T .
\end{equation}
Unfortunately, for most parameters of interest,
finding solutions to the resulting large systems
of equations is outside the realm of computational
feasibility.

\looseness=-1
However, if we impose a prescribed group of automorphisms $A$
on a putative solution, thereby reducing the size of
the problem, the situation can still be described
in terms of a system of linear equations. In this case,
the rows and columns of the matrix $\bM^A$ correspond
to $A$-orbits of $t$-subspaces and $k$-subspaces; the
entries of $\bM^A$ are nonnegative integers, possibly
greater than~$1$. This 
is analogous to a well-known technique in design theory that
is called the
{Kramer--Mesner method} after its developers~\cite{KM}.
For more details on applications of the Kramer--Mesner method
in the context of designs over $\F_q$, see~\cite{BKL}.

There are several 
group-theoretic algorithms that,
given a prescribed group $A$ acting on a set of finite structures,
compute the orbits under~$A$ and produce the corresponding
Kramer--Mesner matrix. For more details, see \cite{BKL,Schmalz}.
In our case,~the Kramer--Mesner matrix $\bM^A$, where $A$ is
the normalizer of a Singer subgroup, can be also computed directly
using the invariants in Lemma~\ref{lem:orbit}.

In order to find a solution to (\ref{Mx}) for a given
Kramer--Mesner matrix $\bM^A$, we observe that
when $\lambda = 1$, the system of equations in (\ref{Mx})
reduces to an instance of the \emph{exact cover} problem~\cite{K00}.
That is, we wish to find a set $\mathcal{S}$ of columns of~$\bM^A$
such that for each row of $\bM^A$, there is exactly one column
of~$\mathcal{S}$~containing $1$ in this row. The exact cover
problem can be solved efficiently using the dancing links algorithm
of Knuth. For more on this, see~\cite{KP,K00}.

\looseness=-1
We now specialize the above to the case of the $q$-Steiner system
$\dS_2[2,3,13]$.
At first,
the matrix $\bM$ in (\ref{Mx}) has
$\smash{\G{13}{2}} = 11\,180\,715$ rows and
$\smash{\G{13}{3}} = 3\,269\,560\,515$ columns,
where $\G{n}{k}$ is the $q$-binomial coefficient with $q = 2$.
We need to find an exact cover of the rows of $\bM$ consisting
of some
$|\dS_2[2,3,13]| = \smash{\G{13}{2}/\G{3}{2}} = 1\,597\,245$ columns.
However,
the resulting instance of the exact cover problem
is well beyond the domain of feasibility of existing algorithms.
Instead, we prescribe the normalizer $A_\al$ of a Singer subgroup
of $\GL(13,2)$ as a group of automorphisms.
Specifically, we have used~the Singer subgroup generated by
the primitive element $\alpha \in \F_{2^{13}}$ which
is a root of the polynomial $x^{13}+x^{12}+x^{10}+x^9+1$.
We note that the specific choice of the primitive element
is unimportant in the sense that the set of Singer subgroups
(and, thereby, also the set of their normalizers) forms a conjugacy
class of subgroups of $\GL(n,q)$. 
By Theorem~\ref{order}, we have
$|A_\al| = 13\bigl(2^{13}-1\bigr) = 106\,483$.
The \mbox{orbits} of $2$-subspaces and $3$-subspaces
under the action of $A_\al$ are all
full-length, resulting in a Kramer--Mesner matrix $\bM^{A_\al}$
with 
$\smash{\G{13}{2}}/106\,483 = 105$ rows~and
$\smash{\G{13}{3}}/106\,483 = 30\,705$ columns.
As all the orbits have full length $|A_\al|$,~we~need
to find an exact cover consisting of 
$|\dS_2[2,3,13]|/|A_\al| = 15$
columns~of~$\bM^{A_\al}$.
One such sets of columns~corresponds to the $15$ subspaces of $\F_2^{13}$ listed
below:\vspace{0.30ex}
%
%
\begin{equation}
\label{eq:sol}
\renewcommand{\arraystretch}{1.10}
\begin{tabular}{@{}l@{~}l@{}}
$\{0,1,1249,5040,7258,7978,8105\}$,&
$\{0,7,1857,6681,7259,7381,7908\}$,\\
$\{0,9,1144,1945,6771,7714,8102\}$,&
$\{0,11,209,1941,2926,3565,6579\}$,\\
$\{0,12,2181,2519,3696,6673,6965\}$,&
$\{0,13,4821,5178,7823,8052,8110\}$,\\
$\{0,17,291,1199,5132,6266,8057\}$,&
$\{0,20,1075,3939,3996,4776,7313\}$,\\
$\{0,21,2900,4226,4915,6087,8008\}$,&
$\{0,27,1190,3572,4989,5199,6710\}$,\\
$\{0,30,141,682,2024,6256,6406\}$,&
$\{0,31,814,1161,1243,4434,6254\}$,\\
$\{0,37,258,2093,4703,5396,6469\}$,&
$\{0,115,949,1272,1580,4539,4873\}$,\\
\multicolumn{2}{c}{$\{0,119,490,5941,6670,6812,7312\}$.}\\[0.30ex]
\end{tabular}
\end{equation}
Each $3$-subspace
$\{\zero,\alpha^{x_1},\alpha^{x_2},\ldots,\alpha^{x_7}\}$
is specified in (\ref{eq:sol}) in terms of the exponents
$\{x_1,x_2,\ldots,x_7\}$
of its nonzero elements. The
$A_\al$-orbits of the $15$ subspaces in (\ref{eq:sol})
form a $q$-Steiner system $\dS_2[2,3,13]$,
thereby proving Theorem~\ref{main}.

The first solution to the exact cover problem instantiated by
the Kramer--Mesner matrix $\bM^{A_\al}$ was found in about two
hours on a personal computer. After about a month, we have
found $401$ distinct solutions. By Theorem~\ref{thm:iso2}, these
solutions give rise to $401$ nonisomorphic
$\dS_2[2,3,13]$ $q$-Steiner systems.
We note, however, that
classifying \emph{all} the solutions to the exact
cover instance specified by $\bM^{A_\al}$ does not
appear to be computationally feasible.
%
%

\looseness=-1
Inspired by the positive results for $\dS_2[2,3,13]$,
we have searched extensively for other $q$-Steiner systems,
with various 
parameters, while imposing
certain groups of automorphisms. We were able to
resolve definitively~the~seven 
cases
listed below. Unfortunately, in all these cases the
outcome was negative. Our results show that
$q$-Steiner systems with the following parameters
and automorphisms do not exist:\vspace{0.15ex}
\begin{equation}
\label{nonexistence}
\begin{tabular}{r@{\,~}l}
$\dS_2[2,3,7]$, & Galois group $\Gal(\F_{2^7}/\F_2)$ (order 7)\\[0.05ex]
$\dS_2[3,4,8]$, & Singer subgroup (order 255)\\[0.05ex]
$\dS_2[2,4,10]$, & normalizer of Singer subgroup (order 10\,230)\\[0.05ex]
$\dS_2[2,4,13]$, & normalizer of Singer subgroup (order 106\,483)\\[0.05ex]
$\dS_2[3,4,10]$, & normalizer of Singer subgroup (order 10\,230)\\[0.05ex]
$\dS_3[2,3,7]$, & Singer subgroup (order 2\,186)\\[0.05ex]
$\dS_5[2,3,7]$, & normalizer of Singer subgroup (order 546\,868)\\[0.05ex]
\end{tabular}
\end{equation}
This extends upon the previous work on
nonexistence of $q$-Steiner systems~\cite{EV11b,KK08b,T96}.
For example, 
it was shown in~\cite{KK08b} that
a $q$-Steiner~system $\dS_2[2,3,7]$ admitting
a Singer subgroup
as a group of automorphisms does not exist.

\vspace{2.50ex}
\section{Related results}
\label{sec:related}

\looseness=-1
Obviously, an $\dS_2[2,3,13]$ $q$-Steiner system gives
rise to an~$S(2,7,8191)$~Stei\-ner system: simply represent each
subspace of $\F_2^{13}$ by the characteristic vector of the set
of its nonzero elements.
We observe
that Steiner systems with these~parameters\linebreak were already
known~\cite[Table 3.3]{AG}.
Notably, however, it follows
from~\cite[The\-o\-rem 3.2]{EV11b}
that $\dS_2[2,3,13]$
also gives rise to an $S(3,8,8192)$ Steiner system.
No $S(3,2^k,2^n)$ Steiner systems with $2^k \geq 8$ were
previously known~\cite{CD07,EV11b}.
The new $S(3,8,8192)$ Steiner~system can be used in various
constructions (e.g., those of \cite{BJL,Blanchard,CD07,H79}) to produce
new $S(3,8,v)$ Steiner systems for many other values of $v$.

Following~\cite{EV11a,KK08a}, we let $\mathcal{A}_q(n,d,k)$ denote
the size of the largest subspace code in $\F_q^n$ of constant
dimension $k$ and minimum subspace distance~$d$. Then
the existence of $\dS_2[2,3,13]$ implies that
$\mathcal{A}_2(13,4,3) = 1\,597\,245$ (the upper~bound
$\mathcal{A}_2(13,4,3) \leq 1\,597\,245$ follows
from~\cite[Theorem\,1]{EV11a}). 

The new $\dS_2[2,3,13]$ $q$-Steiner systems found in
Section~\ref{sec:sol} also produce new diameter-perfect
codes in the corresponding Grassmann graph.
Precious~few~examples of such codes are known. For more on
the connection between $q$-Steiner systems and
diameter-perfect codes in a Grassmann graph, see
\cite{AAK,SE}.

In the remainder of this section, we expound upon the
connection between 
$q$-Steiner systems and difference families.
Recall from~\cite{AB} that a \emph{$(v,k,\lambda)$ difference family}
over an additive group $G$ of order $v$ 
is a collection $B_1,B_2,\ldots,B_s$ of $k$-subsets
of $G$ such that every nonidentity element of $G$ occurs
exactly $\lambda$ times in the
multiset $\bigl\{a-b ~:~ a,b \in B_i,\ a \neq b,\ 1 \leq i \leq s\bigr\}$.
\begin{theorem}
\label{thm:connect}\looseness=-1
\hspace*{-2pt}Let $k$ and $n$ be 
coprime, and
suppose there exists an $\dS_2[2,k,n]$ \mbox{$q$-Steiner} system
admitting
a Singer subgroup $C_\al$ as a group of automorphisms.
Then there exists a $(2^n-1,2^k-1,1)$ difference
family over the group\/ $\Z_{2^n-1}$.
\end{theorem}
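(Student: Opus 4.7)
The plan is to identify the nonzero elements of $\F_{2^n}$ with $\Z_{2^n-1}$ via $\alpha^i \leftrightarrow i$, so that the generator of $C_\alpha$ acts as the translation $i \mapsto i+1$. I would then choose a system of representatives $B_1,\ldots,B_s$ for the $C_\alpha$-orbits on the blocks of $\dS$ and set $\tilde B_i \deff \{a \in \Z_{2^n-1} : \alpha^a \in B_i\}$, and show that $\tilde B_1,\ldots,\tilde B_s$ is the desired difference family.

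The first step, which I expect to be the main obstacle, is to show that every $C_\alpha$-orbit on blocks of $\dS$ has full length $2^n-1$; this is where the hypothesis $\gcd(k,n) = 1$ enters in an essential way. If $\alpha^j$ stabilizes a block $B$, then $\langle \alpha^j \rangle$ acts freely on $B \setminus \{\zero\}$ (multiplication by $\alpha^j \ne 1$ fixes only $\zero$), so the order of $\alpha^j$ divides $|B \setminus \{\zero\}| = 2^k-1$; it also divides $2^n-1$, hence divides $\gcd(2^k-1, 2^n-1) = 2^{\gcd(k,n)}-1 = 1$. Therefore $\alpha^j = 1$, all orbits are full, and $s = |\dS|/(2^n-1)$.

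For each nonzero $d \in \Z_{2^n-1}$, let $n(d)$ denote the number of ordered pairs $(a,b) \in \tilde B_i \times \tilde B_i$, summed over all $i$, satisfying $a \ne b$ and $a - b \equiv d$. A direct count gives $\sum_{d \ne 0} n(d) = s(2^k-1)(2^k-2)$, and substituting $|\dS| = (2^n-1)(2^{n-1}-1)/((2^k-1)(2^{k-1}-1))$ simplifies this to $2^n - 2$, which is exactly $|\Z_{2^n-1} \setminus \{0\}|$. It therefore suffices to prove $n(d) \geq 1$ for every nonzero $d$. To that end, consider the $2$-subspace $U_d = \{\zero, 1, \alpha^d, 1 + \alpha^d\}$, which is genuinely $2$-dimensional since $d \ne 0$ forces $\alpha^d \ne 1$. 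By the $q$-Steiner property, $U_d$ is contained in a unique block $B \in \dS$; writing $B = \alpha^t B_i$ for the appropriate $i$ and $t$, the translated subspace $\alpha^{-t}(U_d) \subseteq B_i$ contains both $\alpha^{d-t}$ and $\alpha^{-t}$, so the pair $(d-t, -t) \in \tilde B_i \times \tilde B_i$ witnesses $n(d) \geq 1$. Combined with the counting identity, this forces $n(d) = 1$ for every nonzero $d$, and the sets $\tilde B_1,\ldots,\tilde B_s$ constitute a $(2^n-1, 2^k-1, 1)$ difference family over $\Z_{2^n-1}$.
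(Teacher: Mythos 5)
Your proposal is correct and follows essentially the same route as the paper: identify exponents with $\Z_{2^n-1}$, take one block per full-length $C_\alpha$-orbit, show each nonzero difference arises at least once via the unique block containing a suitable $2$-subspace, and then conclude exactly once from the count $s(2^k-1)(2^k-2)=2^n-2$. The only difference is that you supply an explicit argument (via $\gcd(2^k-1,2^n-1)=1$) for the full-length-orbit claim, which the paper merely asserts from the coprimality of $k$ and $n$.
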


\begin{proof}
\looseness=-1
Fix a primitive element $\al$ of $\F_{2^n}$, and let $\varphi$ be
the bijective homomorphism from the multiplicative group
of $\F_{2^n}$ to $\Z_{2^n-1}$
defined by $\varphi(\al^i) = i$. We extend $\varphi$ to sub\-spaces
$X = \{\zero,\alpha^{x_1},\alpha^{x_2},\ldots,\alpha^{x_m}\}$
of $\F_2^n$ in the obvious way, by defining
$\varphi(X) = \{x_1,x_2,\ldots,x_m\} \subseteq \Z_{2^n-1}$. Now let $\dS$
be an $\dS_2 [2,k,n]$ $q$-Steiner system admitting $C_\al$ as a group
of automorphisms. Partition~the~subspaces of $\dS$ into orbits
under the action of $C_\al$. Since $k$ and $n$ are coprime,
all such orbits have full length $|C_\al|$, 
so that the number of orbits is given by
$$
s
\ = \
\frac{|\dS_2[2,k,n]|}{|C_\al|}
\ = \
\frac{2^{n-1}-1}{(2^k-1)(2^{k-1}-1)}
$$
We choose (arbitrarily) one 
subspace from each orbit. Let $X_1,X_2,\ldots,X_s$~be~the
resulting set of orbit representatives. We claim
that $\varphi(X_1),\varphi(X_2),\ldots,\varphi(X_s)$ is
a~$(2^n-1,2^k-1,1)$ difference family over $\Z_{2^n-1}$.

Indeed, consider an arbitrary
nonzero element $a \in \Z_{2^n-1}$.
Observe that $a$ 
can~be obtained as a difference of two group elements
in exactly $2^n-1$ ways: $(a+i) - i$ for $i = 0,1,\ldots,2^n-2$.
To each pair $\{a+i,i\}$, there corresponds a $2$-subspace
$\{\zero,\al^i,\al^{a+i},\al^i + \al^{a+i}\}$, 
and to each such $2$-subspace, there corresponds a~unique
$k$-subspace of $\dS$.
All such $k$-subspaces of $\dS$
are in the same orbit under the action of $C_\al$, and every
$k$-subspace in this orbit contains
$\{\zero,\al^j,\al^{a+j},\al^j + \al^{a+j}\}$
for some~$j$. It follows that $a$ occurs at least once
as a difference of two elements of~$\varphi(X)$, where
$X$ is the representative of the corresponding orbit.
But the total number of differences in the set
$\bigl\{a-b ~:~ a,b \in \varphi(X_i),\ a \neq b,\ 1 \leq i \leq s\bigr\}$
is given by $s(2^k-1)(2^k-2) = 2^n -2$, which completes the proof.
\end{proof}

In fact, the following more general result is true:
if $k$ and $n$ are coprime, 
then
an $\dS_q[2,k,n]$ $q$-Steiner system that admits \pagebreak
a Singer subgroup as a~group of automorphisms
gives rise to a
$\bigl((q^n-1)/(q-1),\,(q^k-1)/(q-1),\,1\bigr)$
difference family over 
$\Z_{(q^n-1)/(q-1)}$.
We omit the proof, which~is~similar to
the proof of Theorem~\ref{thm:connect}.


In order to obtain an $(8191,7,1)$ difference family
from the $15$ sets in (\ref{eq:sol}), first adjoin to
each such set $\{x_1,x_2,\ldots,x_7\}$ the $12$ sets
$\{2^ix_1,2^ix_2,\ldots,2^ix_7\}$ modulo $8191$, for
$i = 1,2,\ldots,12$ (thereby accounting for the
action of the~Galois group). The resulting $15\,{\cdot}\,13 = 195$
sets indeed form an $(8191,7,1)$ difference family over $\Z_{8191}$.
We observe that $(8191,7,1)$ difference families 
over~$\Z_{8191}$ were already known. They were obtained
in~\cite{G} using a modification of a
construction 
due to Wilson~\cite{W}.

\vspace{3ex}
\section{Discussion and open problems}
\label{sec:conc}

There is no good reason to believe that many $q$-Steiner systems,
other~than $\dS_2[2,3,13]$,
would not exist. 
In particular, we~conjecture as follows.
\begin{conjecture}
If\/ $n \geq 13$ is a prime such that\/ $n \equiv 1\! \pmod 6$
then there exists a~$q$-Steiner system $\dS_2[2,3,n]$.
\end{conjecture}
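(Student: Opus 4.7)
The plan is to generalize the construction that succeeded for $n=13$ by prescribing, for each prime $n \equiv 1 \pmod 6$ with $n \geq 13$, the normalizer $A_\al$ of a Singer subgroup of $\GL(n,2)$ as a group of automorphisms of a putative $\dS_2[2,3,n]$. I would first verify admissibility: the block count $|\dS_2[2,3,n]| = (2^n-1)(2^{n-1}-1)/21$ is integral iff $21 \mid (2^n-1)(2^{n-1}-1)$, which for $n$ an odd prime greater than $3$ reduces exactly to $3 \mid n-1$, i.e.\ $n \equiv 1 \pmod 6$, matching the hypothesis. By Theorem~\ref{order}, $|A_\al| = n(2^n-1)$; for $n$ prime the orbits of $2$- and $3$-subspaces under $A_\al$ are generically of full length, and the Kramer--Mesner matrix $\bM^{A_\al}$ then has roughly $\G{n}{2}/[n(2^n-1)]$ rows and $\G{n}{3}/[n(2^n-1)]$ columns, to be exactly covered by $|\dS_2[2,3,n]|/[n(2^n-1)]$ columns; by Theorem~\ref{thm:iso2}, any such cover yields a $q$-Steiner system.

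Since direct exact-cover search rapidly becomes infeasible as $n$ grows (the $n=13$ search already consumed weeks of CPU time), a genuinely constructive approach is required. The most natural route goes through Theorem~\ref{thm:connect}: an $A_\al$-invariant $\dS_2[2,3,n]$ is equivalent to a $(2^n-1,\,7,\,1)$ difference family over $\Z_{2^n-1}$ whose base blocks are exponent sets of $3$-dimensional $\F_2$-subspaces of $\F_{2^n}$, further organized into Frobenius orbits via the invariant $\rho$ of Section~\ref{sec:map}. Wilson-type cyclotomic constructions are known to produce $(2^n-1,7,1)$ difference families (as cited for the $n=13$ case in~\cite{G,W}); I would attempt to layer onto such a construction the algebraic constraint that each base block arise as the nonzero part of an $\F_2$-subspace, for instance by selecting each block as the $\F_2$-span of a triple $\{1,\al^a,\al^b\}$ whose pairwise ratios sweep out prescribed cyclotomic residue classes of $\F_{2^n}^\times$ modulo the Singer--Frobenius action.

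The hardest step is the simultaneous enforcement of two algebraically incompatible-looking conditions: cyclotomic/Wilson constructions give difference-family balance for free but ignore $\F_2$-linearity, while fixing base blocks to be subspaces makes linearity automatic but leaves the difference multiset entirely uncontrolled. A uniform bridge between these, valid for an infinite family of primes $n$---in particular covering both the case where $2^n-1$ is a Mersenne prime (so $\Z_{2^n-1}$ is a field and Wilson's machinery applies cleanly) and the case where $2^n-1$ is composite---appears to demand substantially new ideas. I expect partial progress first for the Mersenne-prime subfamily, and would anticipate that a fully general proof ultimately requires a randomized absorbing argument in the spirit of Keevash's resolution of the existence question for classical Steiner systems, adapted to the $q$-analog setting and to the rigidity imposed by $\F_2$-linearity of blocks.
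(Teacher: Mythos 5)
This statement is presented in the paper as an open \emph{conjecture}; the paper gives no proof of it, and your proposal does not constitute one either. What you have written is a research programme, and by your own admission its central step is unresolved: you state that bridging the cyclotomic/Wilson difference-family constructions with the requirement that every base block be the exponent set of an $\F_2$-subspace ``appears to demand substantially new ideas,'' and you then defer to a hypothetical Keevash-style absorption argument that you do not adapt or even outline in the $q$-analog setting. A proof cannot rest on a step that is explicitly acknowledged to be missing. Your preliminary observations are fine as far as they go --- the admissibility computation $21 \mid (2^n-1)(2^{n-1}-1)$ reducing to $n \equiv 1 \pmod 6$ for primes $n>3$ is correct, and the Kramer--Mesner setup with $A_\al$ prescribed is exactly the framework the paper uses for $n=13$ --- but neither of these produces blocks for general $n$.

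Two further concrete obstructions in your route. First, the exact-cover reduction, while valid for each fixed $n$, is a finite computation per prime and can never establish an infinite family; and as you note it is already at the edge of feasibility at $n=13$, so it does not even supply further data points. Second, Theorem~\ref{thm:connect} is stated as a one-way implication: a Singer-invariant $\dS_2[2,k,n]$ yields a difference family, not conversely. The converse holds only for difference families whose base blocks happen to be (exponent sets of) $\F_2$-subspaces, and the known $(2^n-1,7,1)$ difference families of Wilson and Greig are not of this form --- indeed, if they were, the conjecture would already follow for every $n$ where those constructions apply, and the paper would not have needed a month of computer search for the single case $n=13$. Until you exhibit a mechanism forcing $\F_2$-linearity of the base blocks while preserving the difference-family property, the conjecture remains exactly that.
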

\looseness=-1

The apparent large number of isomorphism classes of $\dS_2[2,3,13]$
$q$-Steiner systems suggests that an $\dS_2[3,4,14]$ $q$-Steiner system
might exist. A more~general open question is whether nontrivial
$q$-Steiner systems $\dS_q[t,k,n]$ exist for parameters
other than $q=2$, $t=2$, and $k=3$.

In fact, in light of our results, the main question is
no longer whether \mbox{$q$-Steiner} systems exist but rather
how they can be found.
Not only should~com\-puter-aided searches
be carried out, but one should also consider 
algebraic~and combinatorial constructions of either specific
$q$-Steiner systems or even 
infinite families
of $q$-Steiner systems
(e.g., in the framework of difference sets). 

\vspace{3.00ex}
\section*{Acknowledgments}

\noindent
The authors are grateful to
the organizers of
the conference {\sc ``Trends in Coding Theory,''} held in
Ascona, Switzerland, between October 28, 2012, and November~2, 2012, where the
final pieces of this work were put together. The COST Action IC1104,
``Random network coding and designs over GF($q$)'' gave some of the motivation
for gathering four of the authors together in Ascona. The authors also thank Don
Knuth for making available his dancing links software. Last, but certainly not
the least we are grateful to Eimear Byrne who checked our results and found
that we wrote the wrong primitive polynomial in an earlier version.

\vspace{4.00ex}
\bibliographystyle{plain}

\end{document}